\newtheorem{theorem}{Theorem}[section]
\newtheorem{prop}[theorem]{Proposition}
\newtheorem{lemma}[theorem]{Lemma}
\newtheorem{cor}[theorem]{Corollary}
\newtheorem{definition}[theorem]{Definition}
\newtheorem{convention}[theorem]{Convention}
\newcommand{\btheorem}{\begin{theorem}}
\newcommand{\etheorem}{\end{theorem}}
\newcommand{\bprop}{\begin{prop}}
\newcommand{\eprop}{\end{prop}}
\newcommand{\blemma}{\begin{lemma}}
\newcommand{\elemma}{\end{lemma}}
\newcommand{\bcor}{\begin{cor}}
\newcommand{\ecor}{\end{cor}}
\theoremstyle{remark}
\newtheorem{remark}[theorem]{Remark}
\newcommand{\bg}{{\mathfrak g }}
\newcommand{\bq}{{\mathfrak q }}
\newcommand{\bh}{{\mathfrak h }}
\newcommand{\bR}{{\mathbb R}}
\newcommand{\bN}{{\mathbb N}}
\newcommand{\bC}{{\mathbb C}}
\title{A hidden symmetry of a branching law}
\date{}                                           
\author
{
Toshiyuki Kobayashi
\vspace{-10mm}
\footnote
{
Graduate School of Mathematical Sciences,
 The University of Tokyo
 and Kavli IPMU (WPI),
 3-8-1 Komaba, Tokyo, 153-8914 Japan
\newline
\textit{E-mail address}:
\texttt{toshi@ms.u-tokyo.ac.jp}
}
\,\,
 and 
 Birgit Speh
\vspace{-10mm}
\footnote
{Department of Mathematics, Cornell University, Ithaca, NY 14853-4201, USA
\newline
\textit{E-mail address}:
\texttt{bes12@cornell.edu}}
}
\begin{document}

\maketitle

\begin{abstract}
We consider branching laws for the restriction of some irreducible unitary representations $\Pi$ of
$G=O(p,q)$ to its subgroup $H=O(p-1,q)$. 
In Kobayashi (arXiv:1907.07994, \cite{K}), 
 the irreducible subrepresentations of $O(p-1,q)$
 in the restriction of the unitary $\Pi|_{O(p-1,q) }$ are determined. By considering the restriction of packets of irreducible representations we obtain another very simple branching law, which was conjectured
 in {\O}rsted--Speh (arXiv:1907.07544, \cite{OS}).
\end{abstract}

\

{Mathematic Subject Classification (2020):} 
Primary 22E46; 
Secondary 
22E30, 
22E45,
22E50

\section{\bf Introduction}

The restriction of a finite-dimensional irreducible representation $\Pi^G$
 of a connected compact Lie group $G$ to a connected Lie subgroup $H$ is a classical problem.  
For example, 
 the restriction of irreducible representations of $SO(n+1)$ to 
the subgroup $SO(n)$ 
can be expressed as a combinatorial pattern satisfied by the highest weights of
 the irreducible representation $\Pi^G$ of the large group 
and of the irreducible representations appearing
 in the restriction of $\pi^H$ \cite{Weyl97}. 
For the pair $(G,H)=(S O(n+1), S O(n))$, 
 the branching law is always multiplicity-free, 
 {\it{i.e.,}}
\[
\mbox{dim Hom}_H(\pi^H,\Pi^G|_H)\leq 1.
\]

\medskip
In this article we consider a family of infinite-dimensional irreducible representations $\Pi^{p,q}_{\delta,\lambda}$
 with parameters $\lambda \in {\mathbb{Z}} + \frac 1 2(p+q)$,
 and $\delta \in \{ +,-\}$ of noncompact orthogonal groups $G=O(p,q)$
  with $p \ge 3$ and $q \ge 2$, 
 which have the same infinitesimal character as a finite-dimensional representation
 and which are subrepresentations of $L^2(O(p,q)/O(p-1,q))$
 for $\delta=+$,
 respectively of $L^2(O(p,q)/O(p,q-1))$
 for $\delta=-$. 
We shall assume a {\it{regularity condition}} of the parameter $\lambda$
 (Definition \ref{def:admissible}).  
Similarly we consider a family of infinite-dimensional irreducible 
 unitary representations $\pi^{p-1,q}_{\varepsilon,\mu}$, 
 $\varepsilon \in \{+, -\}$
 of noncompact orthogonal groups $H=O(p-1,q)$.

Reviewing the results of \cite{K} we see in Section IV
 that the restriction of these representations to the subgroup $H=O(p-1,q)$
 is either of \lq\lq{finite type}\rq\rq\
 (Convention \ref{conv:finite})
 if $\delta= +$
 or of \lq\lq{discretely decomposable type}\rq\rq\
 (Convention \ref{conv:deco})
 if $\delta =-$. 
If the infinitesimal characters of $\Pi_{\delta,\lambda}^{p,q}$ and of a direct summand of $(\Pi_{\delta,\lambda}^{p.q})|_{H}$ satisfy an interlacing condition
 \eqref{eqn:inter2} similar to that of the finite-dimensional representations
 of $(SO(n+1),SO(n))$, 
 then $\delta=+$ and the restriction of a representations $\Pi_{\delta,\lambda}^{p,q}$ is of finite type.  
On the other hand, 
 if the infinitesimal characters $\Pi_{\delta,\lambda}^{p,q}$ and of a direct summand of $(\Pi_{\delta,\lambda}^{p.q})|_{H}$ satisfy another interlacing condition \eqref{eqn:inter} similar to those of the holomorphic discrete series  representations of $(SO(p,2),SO(p-1,2))$, 
 then $\delta=-$
 and the restriction of a representations $\Pi_{\delta,\lambda}^{p,q}$ is
 of discretely decomposable type.

For each $\lambda$ we define a packet $\{ \Pi^{p,q}_{+,\lambda},\Pi^{p.q}_{-,\lambda} \}$ of representations with the same infinitesimal character. 
For simplicity, 
 we assume $p \ge 3$ and $q \ge 2$.  
Using the branching laws for the individual representations we show in Section V:

\begin{theorem} 
Let $(G,H)=(O(p,q), O(p-1,q))$.  
Suppose that $\lambda$ and $\mu$ are regular parameters.
\begin{enumerate}
\item[{\rm{(1)}}]
Let  $\Pi_{\lambda}$ be a representation in the packet $\{ \Pi_{+,\lambda}, \Pi_{-,\lambda} \}$. There exists exactly one representations  $\pi_{\mu } $ in the packet $\{\pi_{+,\mu}, \pi_{-,\mu} \}$
 so that
\[\dim \operatorname{Hom}_{H}(\Pi_{\lambda}|_H,\pi_{\mu}) = 1.\]

\item[{\rm{(2)}}]
Let $\pi_{\mu }$ be in the packet $\{\pi_{+,\mu}, \pi_{-,\mu} \}$. 
There exists exactly one representation  $\Pi_{\lambda} $ in the packet $\{ \Pi_{+,\lambda}, \Pi_{-,\lambda} \}$ so that 
 \[\dim \operatorname{Hom}_{H}(\Pi_{\lambda}|_H,\pi_{\mu}) = 1.\]
\end{enumerate}
\end{theorem}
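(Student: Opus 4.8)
The plan is to deduce the theorem from the two explicit branching laws recalled in Section IV, reducing it to a purely combinatorial statement about a $2\times 2$ array of multiplicities. Fix the regular parameters $\lambda$ and $\mu$, and for $\delta,\varepsilon\in\{+,-\}$ set
\[
m(\delta,\varepsilon):=\dim\operatorname{Hom}_{H}\bigl(\Pi_{\delta,\lambda}|_{H},\,\pi_{\varepsilon,\mu}\bigr).
\]
Assertion (1) is exactly the statement that each of the two rows of the matrix $M=\bigl(m(\delta,\varepsilon)\bigr)_{\delta,\varepsilon}$ contains precisely one entry equal to $1$, and assertion (2) is the statement that each of its two columns does; so it suffices to prove that $M$ is a $2\times 2$ permutation matrix.

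First I would record that every entry of $M$ lies in $\{0,1\}$. This multiplicity-freeness is built into the branching laws of \cite{K} reviewed in Section IV, and is in any case forced by the Sun--Zhu type bound $\dim\operatorname{Hom}_{H}(\Pi|_{H},\pi)\le 1$ for the pair $(O(p,q),O(p-1,q))$. Next I would read off from those branching laws the precise criterion for an entry to equal $1$. For $\delta=+$ the restriction is of finite type (Convention \ref{conv:finite}) and $m(+,\varepsilon)=1$ is governed by the interlacing relation \eqref{eqn:inter2} between the infinitesimal characters of $\Pi_{+,\lambda}$ and $\pi_{\varepsilon,\mu}$, which is expected to single out one admissible sign $\varepsilon=\varepsilon_{+}(\lambda,\mu)$; for $\delta=-$ the restriction is discretely decomposable (Convention \ref{conv:deco}) and $m(-,\varepsilon)=1$ is governed by the other interlacing relation \eqref{eqn:inter}, singling out one sign $\varepsilon=\varepsilon_{-}(\lambda,\mu)$. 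Granting that each relation does select exactly one sign for every regular $\mu$, each row of $M$ carries exactly one $1$ and (1) follows.

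The heart of the argument, and the place where the \emph{hidden symmetry} of the title really enters, is to show that the two selected signs are opposite, $\varepsilon_{+}(\lambda,\mu)=-\,\varepsilon_{-}(\lambda,\mu)$, for every pair of regular parameters. Granting this, the two $1$'s of $M$ sit in different columns, so each column also has exactly one $1$, $M$ is a permutation matrix, and (2) follows. I would prove the sign statement by directly comparing the sign rules attached to \eqref{eqn:inter} and \eqref{eqn:inter2}: because the two interlacing patterns are the two complementary ways in which $\mu$ can be threaded against $\lambda$ (the \lq\lq finite-type\rq\rq\ pattern modeled on $(SO(n+1),SO(n))$ versus the \lq\lq holomorphic\rq\rq\ pattern modeled on $(SO(p,2),SO(p-1,2))$), the parity and positivity data determining $\varepsilon$ must flip when one passes from one pattern to the other.

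I expect the main obstacle to be exactly this complementarity, of which two features must be checked and neither is automatic. First, that for \emph{every} regular $\mu$ (not merely those lying in some interlacing window) both branching laws contribute, i.e.\ that neither row nor column of $M$ is identically zero; this is what rules out the degenerate possibility that some $\pi_{\varepsilon,\mu}$ fails to pair with either member of the $G$-packet. Second, that the two selected signs never coincide. Here the regularity hypothesis on $\lambda$ and $\mu$ is essential: it is precisely what excludes the boundary configurations in which an interlacing inequality degenerates to an equality, configurations that would otherwise produce a row or column with two $1$'s or with none, and would destroy the permutation-matrix structure.
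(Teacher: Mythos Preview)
Your reduction to the $2\times 2$ matrix $M=(m(\delta,\varepsilon))$ is the right move and matches the paper's approach, but your structural claim about $M$ is wrong, and this is a genuine gap. The branching laws of Section~IV do \emph{not} say that each interlacing condition selects a sign $\varepsilon$. Proposition~\ref{prop:disc} says every discrete summand of $\Pi_{-,\lambda}|_H$ is of the form $\pi_{-,\cdot}$, and Proposition~\ref{prop:finite} says every discrete summand of $\Pi_{+,\lambda}|_H$ is of the form $\pi_{+,\cdot}$. Hence $m(+,-)=m(-,+)=0$ unconditionally, so $M$ is \emph{diagonal}; there is no sign $\varepsilon_\delta(\lambda,\mu)$ to be read off from \eqref{eqn:inter} or \eqref{eqn:inter2}, because those relations involve only $\lambda$ and $\mu$ and not $\varepsilon$ at all. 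For the diagonal entries, Propositions~\ref{prop:disc2} and~\ref{prop:finite2} give, for regular $\lambda,\mu$, that $m(-,-)=1\iff \mu>\lambda$ and $m(+,+)=1\iff \lambda>\mu$. Since $\lambda-\mu\in\mathbb{Z}+\tfrac12$, equality is impossible, and exactly one of these holds. So $M$ has precisely one nonzero entry, not two; it is never a permutation matrix.

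This means the very obstacle you flagged (``neither row nor column of $M$ is identically zero'') is not a subtlety to be overcome but is in fact false: one row and one column \emph{are} identically zero. Consequently your reading of assertions (1) and (2) as ``each row (resp.\ column) of $M$ has exactly one $1$'' cannot be established---it fails for one choice of $\delta$ (resp.\ $\varepsilon$). What the paper actually proves, by the elementary dichotomy above, is the ``Version~2'' statement $\sum_{\delta,\varepsilon}m(\delta,\varepsilon)=1$, together with the identification (Version~3) of which single pair $(\delta,\varepsilon)$ contributes; Version~1 should be read as a packaging of that, not as the stronger row/column claim you are trying to prove. The ``hidden symmetry'' is thus not a sign flip $\varepsilon_+=-\varepsilon_-$ between two nonzero rows, but the complementarity of the two interlacing windows $\lambda>\mu$ and $\mu>\lambda$ on the diagonal.
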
 

Equivalently we may formulate the result as follows:
\begin{theorem} 
[Version 2]
Suppose that $\lambda $ and  $\mu $ are regular parameters. Then
\[
   \dim \operatorname{Hom}_H((\Pi_{+,\lambda} \oplus \Pi_{-, \lambda})|_H, (\pi_{+,\mu} \oplus \pi_{-,\mu}))= 1. 
\]

\end{theorem}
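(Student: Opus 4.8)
The plan is to reduce the packet identity to the branching laws for the individual members of the two packets recorded in Section IV, and then to settle a short dichotomy. First I would use that, for continuous morphisms of representations, $\operatorname{Hom}_H$ is additive in each argument, so that
\[
\dim \operatorname{Hom}_H\bigl((\Pi_{+,\lambda} \oplus \Pi_{-,\lambda})|_H, \pi_{+,\mu} \oplus \pi_{-,\mu}\bigr)
= \sum_{\delta,\varepsilon \in \{+,-\}} \dim \operatorname{Hom}_H\bigl(\Pi_{\delta,\lambda}|_H, \pi_{\varepsilon,\mu}\bigr).
\]
Hence it suffices to prove that, for regular $\lambda$ and $\mu$, exactly one of the four summands on the right equals $1$ while the other three vanish.

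Next I would feed in the two branching mechanisms. For $\delta = +$ the restriction $\Pi_{+,\lambda}|_H$ is of finite type (Convention \ref{conv:finite}), and $\pi_{\varepsilon,\mu}$ supports a one-dimensional space of symmetry breaking operators precisely when the interlacing condition \eqref{eqn:inter2} holds; for $\delta = -$ the restriction $\Pi_{-,\lambda}|_H$ is discretely decomposable (Convention \ref{conv:deco}), and $\pi_{\varepsilon,\mu}$ occurs as a direct summand with multiplicity one precisely when \eqref{eqn:inter} holds. In either case the multiplicity is at most one, so each of the four terms lies in $\{0,1\}$ and is controlled by one of the two interlacing conditions together with a rule fixing the admissible sign $\varepsilon$.

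The hard part will be the dichotomy: to show that for regular $\lambda,\mu$ the conditions \eqref{eqn:inter2} and \eqref{eqn:inter} are mutually exclusive and jointly exhaustive, and that in whichever regime is in force the sign $\varepsilon$ (hence also $\delta$) is uniquely pinned down. Here regularity is decisive. Since $\lambda$ and $\mu$ lie in cosets of $\mathbb{Z}$ differing by a half-integer, one always has $\mu \neq \pm\lambda$, so the configuration never sits on the boundary separating the two regimes; regularity then excludes the remaining degenerate alignments. This is exactly what rules out both a \emph{missing} match (all four terms zero) and a \emph{double} match (two nonzero terms), leaving precisely one. I expect the genuine, if elementary, labor to be the sign bookkeeping inside each regime, whereas the deeper input — multiplicity one and the interlacing characterizations themselves — is supplied by \cite{K} as reviewed in Section IV.

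Assembling the two cases then gives exactly one pair $(\delta,\varepsilon)$ with a one-dimensional Hom space and three vanishing terms, so the four-term sum equals $1$. This also explains the asserted equivalence: the first formulation distinguishes the two regimes case by case, and the packet formulation merely absorbs that distinction into a single uniform statement.
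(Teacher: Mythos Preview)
Your proposal is correct and follows exactly the paper's approach: the theorem is deduced directly from the individual branching laws of Section~IV (Propositions~\ref{prop:disc} and~\ref{prop:finite}), which already force the cross terms $(\delta,\varepsilon)=(+,-)$ and $(-,+)$ to vanish and reduce the remaining dichotomy to the observation that $\lambda$ and $\mu$ lie in distinct $\mathbb{Z}$-cosets, so exactly one of $\mu>\lambda$ or $\lambda>\mu$ holds. The ``hard part'' you anticipate is therefore no harder than the elementary sign bookkeeping you already identify, and the paper treats it with the same brevity.
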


Another version of this theorem using interlacing properties of infinitesimal characters is stated in 
Section V.

\medskip

{\bf Acknowledgements} 
The authors would like to acknowledge support by the MFO during  research in pairs stay during which part of this work was accomplished.

The first author was partially supported by Grant-in-Aid for Scientific Research (A) (18H03669), Japan Society for the Promotion of Science.\\
The second author was partially supported by Simons Foundation collaboration grant, 633703.  

{\bf Notation:}: $\bN = \{0,1,2,\dots ,\} $ and  $\bN_+ = \{1,2, \dots ,\}$.  

\bigskip

%%%%%%%%%%%%%%%%%

\section{\bf Generalities}

We will use in this article the notation and conventions of \cite{K} which we recall now. These conventions  differ from those used in \cite{OS}.

\medskip
Consider the standard quadratic form on ${\mathbb{R}}^{p+q}$
\begin{equation}
\label{eqn:quad}
  Q(X,X)=    x_1^2 + \dots +x_p^2 -x_{p+1}^2 -\dots -x_{p+q}^2
\end{equation}
 of signature $(p,q)$ in a basis $e_1, \dots ,e_p, e_{p+1},\dots e_{p+q}$.
We define $G=O(p,q)$
 to be the indefinite special orthogonal group
 that preserves the quadratic form $Q$.  
Let $H$ be the stabilizer
 of the vector $e_{1
 }$.  
Then $H$ is isomorphic to $O(p-1,q)$.  

Consider another quadratic form on ${\mathbb{R}}^{p+q}$
\begin{equation}
\label{eqn:quad-}
  Q_-(X,X)=    x_1^2 + \dots +x_q^2 -x_{q+1}^2 -\dots -x_{p+q}^2
\end{equation}
 of signature $(q,p)$ with respect to a basis $e_{-,1}, \dots ,e_{-,p}, e_{-,p+1},\dots e_{-,p+q}$. The orthogonal group $G_- = O(q,p)$ that preserves the quadratic form $Q_-$ is conjugate to $O(p,q)$ in $GL(p+q,\bR)$. Thus we may consider representations of $G_-= O(q,p)$ as representations of $G=O(p,q).$

Since $G$ and $G_-$ are conjugate, 
 the subgroup $H$ of $G$ is also conjugate to a subgroup $H_- $ of $G_-$ 
 which is isomorphic to $ O(q,p-1)$.   
This group isomorphism induces an isomorphism 
 of homogeneous spaces $G/H =O(p,q)/O(p-1,q) $ and $G_-/H_-= O(q,p)/O(q,p-1)$.  
  On the other hand  $O(p,q)/ O(p-1,q)$ and $O(q,p) / O(q-1,p)$  are not even homeomorphic to each other
 if $p \ne q$.
In the rest of the article we will  assume that   the subgroup $H_-$  preserves  the vector $e_{-,p+q}$.

The maximal compact subgroups of $G$, $G_-$ and $H,$ $H_-$ are denoted by  $K, $ $K_-$  respectively $K_H$ , $K_{H_-}$. The Lie algebras of the groups are denoted by the corresponding lowercase Gothic letters.

{\em To avoid considering special cases we make in this article the following:

{\bf Assumption ${\mathcal O }$:}
  
\begin{center}
              $p \ge 3$ and $q \ge 2$.  
\end{center}
}

%%%%%%%%%%%%%%%%%%%%%%%%%%%%%%%
\section{ \bf Representations}
%%%%%%%%%%%%%%%%%%%%%%%%%%%%%%

We consider in this article  a family  of irreducible unitary  representations introduced in \cite{K}. 
Using the notation in \cite{K} we recall  their parametrization and  some important properties in this section.
The main reference is \cite[Sect.~2]{K}.

The irreducible unitary subrepresentations of $L^2(O(p,q)/O(p-1,q))$   were considered by many authors after the pioneering work
 by I.~M.~Gelfand et. al. \cite{Gelfand5}, 
 T.~Shintani, V.~Molchanov,
 J.~Faraut \cite{F79}, 
 and R.~Strichartz  \cite{S83}.  
For $p \ge 2$ and $q \ge 1$, 
 they are parametrized by $\lambda \in {\mathbb{Z}}+\frac 1 2(p+q)$ 
 with $\lambda>0$.  
Following the notation of \cite{K}  we denote them  by 
\[\Pi^{p,q}_{+,\lambda}.\]
They have infinitesimal character
\[ 
   (\lambda, \frac{p+q}{2}-2, \frac{p+q}{2}-3, \dots, \frac{p+q}{2}-[\frac{p+q}{2}]), 
\] 
in the Harish-Chandra parametrization
 (see \eqref{eqn:HC} below),
 and the minimal $K$-type 
\begin{equation}
\label{eqn:minK}
\begin{cases}
{\mathcal{H}}^{b(\lambda)}({\mathbb{R}}^p) \boxtimes {\bf{1}}
\qquad
&\text{if $b(\lambda) \ge 0$, }
\\
{\bf{1}} \boxtimes {\bf{1}}
\qquad
&\text{if $b(\lambda) \le 0$, }
\end{cases}
\end{equation}
where $b(\lambda):=\lambda-\frac 1 2(p- q -2)$ $(\in {\mathbb{Z}})$
 and ${\mathcal{H}}^b({\mathbb{R}}^p)$ stands for the space
 of spherical harmonics
 of degree $b$.  
We note
 that $\Pi_{+, \lambda}^{p,q}$ are so called Flensted-Jensen representations discussed in \cite{FJ}
 if $b(\lambda) \ge 0$, 
 namely,
 if $\lambda \ge \frac 12 (p-q-2)$.  
This is the case if $\lambda$ is regular 
 (Definition \ref{def:admissible}).  
The underlying $(\bg,K)$-module
 of $\Pi_{+, \lambda}^{p,q}$
 is given by a Zuckerman derived functor module. 
See \cite[Thm.~3]{K92} or \cite[Sect.~2.2]{K}.  

\begin{remark}
\label{rem:pone}
When $p=1$ and $q \ge 1$, 
 there are {\it{no}} irreducible subrepresentations
 in $L^2(O(p,q)/O(p-1,q))$, 
 and we regard $\pi_{+,\lambda}^{p,q}$ 
 as zero in this case.  
\end{remark}

\begin{remark}
{\rm{(1)}}\enspace
For any $p \ge 2$, $q \ge 1$
 and ${\mathbb{Z}}+ \frac 12 (p+q) \ni \lambda>0$, 
 the representation $\Pi_{+, \lambda}^{p,q}$
 of $G=O(p,q)$ stays irreducible 
 when restricted to $SO(p,q)$, 
 see also Remark \ref{rem:Arthur}.  
\par\noindent
{\rm{(2)}}\enspace
If $p=2$ and $\lambda \ge \frac1 2(p+q-2)$,  
 then the representation $\Pi^{p,q}_{+,\lambda}$ is a direct sum
 of a holomorphic discrete series representation
 and an anti-holomorphic discrete series representation
 when restricted to the identity component 
 $G_0=SO_0(p,q)$ of $G$.  
\end{remark} 

\medskip
Similarly there exist a family of irreducible unitary subrepresentations 
\[
   \Pi^{q,p}_{+,\lambda}
   \qquad
  (\lambda \in {\mathbb{Z}}+\frac 1 2(p+q), \,\, \lambda>0)
\]
of $G_-=O(q,p)$ in $L^2(G_-/H_-)=L^2(O(q,p)/O(q-1,p))$
 when $p \ge 1$ and $q \ge 2$, 
 with the same infinitesimal character and the same properties. 
Via the isomorphism between $(G_-, H_-)$ and $(G,H)$, 
 we may consider them as representations of $G=O(p,q)$ and irreducible subrepresentations of $L^2(G/H)=L^2(O(p,q)/O(p,q-1))$.

If no confusion is possible we use the simplified notation
\begin{align*}
\Pi_{+,\lambda} &= \Pi^{p,q}_{+,\lambda}
\intertext{and} 
   \Pi_{-,\lambda}&\simeq \Pi^{q,p}_{+,\lambda}
\qquad
\text{(via $G_- \simeq G$), }
\end{align*}
to denote representations of $G=O(p,q)$.  

\begin{remark}
\label{rem:tempered}
The irreducible representation $\Pi_{+,\lambda}$ are nontempered
 if $p \ge 3$, 
 and $\Pi_{-,\lambda}$ are nontempered
 if $q \ge 3$.  
\end{remark}

\begin{lemma}
\label{lem:III.2}
Assume that $\lambda \geq  \frac{1}{2}(p+q-2)$.
The representations $\Pi_{+,\lambda}$,  $\Pi_{-,\lambda}$ are inequivalent, but have the same infinitesimal character.  
\end{lemma}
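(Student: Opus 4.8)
The plan is to treat the two assertions separately, with almost all the work going into the inequivalence. For the infinitesimal character I would simply observe that the Harish-Chandra parameter recorded for $\Pi^{p,q}_{+,\lambda}$ depends on $p$ and $q$ only through the sum $p+q$, so applying the same recipe to $\Pi^{q,p}_{+,\lambda}$ (a representation of $G_-=O(q,p)$) produces literally the same tuple. Since the identification $G_-\simeq G$ of Section \ref{} is induced by conjugation inside $GL(p+q,\bR)$, it intertwines the two Harish-Chandra homomorphisms — both Lie algebras complexify to $\mathfrak{so}(p+q,\bC)$ — so the infinitesimal character of $\Pi_{-,\lambda}$, transported to $G$, agrees with that of $\Pi_{+,\lambda}$. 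This is in effect already built into the construction, where $\Pi^{q,p}_{+,\lambda}$ is introduced as having ``the same infinitesimal character,'' so I regard this half as immediate.

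For the inequivalence I would compare minimal $K$-types, using the elementary fact that isomorphic representations have identical restrictions to $K$ and hence the same minimal $K$-type. By \eqref{eqn:minK}, and since the hypothesis $\lambda\ge\frac12(p+q-2)$ forces $b(\lambda)=\lambda-\frac12(p-q-2)\ge q>0$, the minimal $K$-type of $\Pi_{+,\lambda}$ is $\mathcal H^{b(\lambda)}(\bR^p)\boxtimes\mathbf 1$: nontrivial on the first factor $O(p)$ of $K=O(p)\times O(q)$ and trivial on the second factor $O(q)$. For $\Pi^{q,p}_{+,\lambda}$ the analogous formula gives minimal $K_-$-type $\mathcal H^{b_-(\lambda)}(\bR^q)\boxtimes\mathbf 1$ with $b_-(\lambda)=\lambda-\frac12(q-p-2)\ge p>0$, nontrivial on the $O(q)$-factor of $K_-=O(q)\times O(p)$. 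The positivity $b(\lambda)\ge q$ and $b_-(\lambda)\ge p$ is the routine computation I would record first, and it is exactly where the hypothesis $\lambda\ge\frac12(p+q-2)$ and Assumption $\mathcal O$ enter.

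The crucial step — and the one I expect to be the main obstacle — is to track precisely how the isomorphism $G_-\simeq G$ carries $K_-$ onto $K$. Because $Q$ in \eqref{eqn:quad} has signature $(p,q)$ while $Q_-$ in \eqref{eqn:quad-} has signature $(q,p)$, the identification must be realized through $-Q_-$ together with a reordering of coordinates that sends the positive-definite block of $Q_-$ onto the negative-definite block of $Q$ and vice versa. Consequently the $O(q)$-factor of $K_-$ (acting on the positive part of $Q_-$) is matched with the $O(q)$-factor of $K$ (the negative part of $Q$), and the $O(p)$-factor with the $O(p)$-factor, so the minimal $K$-type of $\Pi_{-,\lambda}$, read inside $G$, becomes $\mathbf 1\boxtimes\mathcal H^{b_-(\lambda)}(\bR^q)$ — trivial on $O(p)$ and nontrivial on $O(q)$. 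This is visibly different from $\mathcal H^{b(\lambda)}(\bR^p)\boxtimes\mathbf 1$, since one is nontrivial exactly on the factor where the other is trivial and both spherical-harmonic degrees are strictly positive; note this separation persists even when $p=q$, where the relevant identification is an outer twist of $G$. Hence $\Pi_{+,\lambda}$ and $\Pi_{-,\lambda}$ have distinct minimal $K$-types and are inequivalent. I would avoid trying to separate them via temperedness (Remark \ref{rem:tempered}), since that criterion fails to distinguish the two uniformly once $q\ge 3$; the minimal $K$-type argument is the clean and uniform route.
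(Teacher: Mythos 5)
Your proof is correct and takes essentially the same route as the paper: the paper's own proof also compares minimal $K$-types, recording them as $\mathcal{H}^{b}(\mathbb{R}^p)\boxtimes\mathbf{1}$ and $\mathbf{1}\boxtimes\mathcal{H}^{b'}(\mathbb{R}^q)$ with $b=\lambda-\frac12(p-q-2)$, $b'=\lambda-\frac12(q-p-2)$ both nonnegative under the hypothesis, the equality of infinitesimal characters being built into the construction. Your extra steps — tracking how the isomorphism $G_-\simeq G$ matches the factors of $K_-$ with those of $K$, and noting the strict positivity $b\ge q$, $b'\ge p$ (which is what actually forces the two $K$-types to differ) — merely make explicit what the paper leaves implicit.
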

\begin{proof}
The representation $\Pi_{+,\lambda}$ and $\Pi_{-,\lambda}$ are irreducible representations of $G=O(p,q)$
 with respective minimal $K$-types
\begin{alignat*}{2}
{\mathcal{H}}^b({\mathbb{R}}^p) &\boxtimes {\bf{1}}, 
\qquad
&&
b:=\lambda-\frac 1 2(p-q-2), 
\\
{\bf{1}} &\boxtimes {\mathcal{H}}^{b'}({\mathbb{R}}^q), 
\qquad
&&
b':=\lambda-\frac 1 2(q-p-2), 
\end{alignat*}
because the assumption $\lambda \ge \frac 1 2(p+q-2)$ implies
 both $b \ge 0$ and $b'\ge 0$ by \eqref{eqn:minK}.  
\end{proof}

\begin{remark}
Lemma \ref{lem:III.2} holds in the more general setting 
 where $\lambda \ge 0$, 
 see \cite[Thm.~3 (4)]{K92} 
 for the proof.  
\end{remark}

\begin{remark}
\label{rem:Arthur} 
For $p$ and $q$ positive
 and even, 
 the restriction of the representations $\Pi_{+,\lambda}$,  $\Pi_{-,\lambda}$ to $SO(p,q)$ are in an Arthur packet as discussed in \cite{AJ, MR}.  
Global versions of Arthur packets were introduced by J.~Arthur
 in the theory of automorphic representations 
 and are inspired by the trace formula \cite{A1, A2}.  
Our considerations of Arthur packets of representations
 of the orthogonal groups which are discrete series representations
 for symmetric spaces are inspired by Arthur's considerations as well as by the conjectures of B.~Gross and D.~Prasad.  
In this article we will refer to $\{ \Pi_{+,\lambda},\Pi_{-,\lambda} \}$  as a {\bf packet} of irreducible representations.
\end{remark}

Similarly we have $\mu \in {\mathbb{Z}}+\frac 1 2(p+q-1)$
 satisfying $\mu \geq  \frac{1}{2}(p+q-3)$ a packet $\{ \pi_{+,\mu}, \pi_{-,\mu} \}$ of unitary irreducible representations 
 of $G'=O(p-1,q)$.

\medskip

\begin{definition}
\label{def:admissible}
We say $\lambda \in {\mathbb{Z}}+\frac 1 2(p+q)$ respectively
 $\mu \in {\mathbb{Z}}+\frac 1 2(p+q-1)$
 are {\bf regular}
 if $\lambda \ge \frac 1 2 (p+q-2)$
 respectively 
 $\mu \ge \frac 1 2 (p+q-3)$.  
\end{definition}

\medskip

\begin{remark}
The irreducible representation $\Pi_{+,\lambda}$
 (or $\Pi_{-,\lambda}$) has the same infinitesimal character
 as a finite-dimensional irreducible representation
 of $G=O(p,q)$
 if and only if $\lambda \ge \frac 1 2 (p+q-2)$, 
 namely, 
 $\lambda$ is regular.  
Similarly, 
 $\pi_{+,\mu}$ (or $\pi_{-,\mu}$)
 has the same infinitesimal character 
 with a finite-dimensional representation 
 of $G'=O(p-1,q)$ 
 if and only if $\mu \ge \frac 1 2 (p+q-3)$, 
 namely, $\mu$ is regular.  
\end{remark}

For later use we define for regular $\lambda $ and $\mu $ the reducible  representations
\begin{equation}
\label{eqn:U}
 U(\lambda) = \Pi_{+,\lambda} \oplus \Pi_{-,\lambda}
\end{equation}
and 
\begin{equation}
\label{eqn:V}
 V(\mu) = \pi_{+,\mu} \oplus \pi_{-,\mu}.  
\end{equation}
of $G=O(p,q)$ respective of $H= O(p-1,q)$.

%%%%%%%%%%%%%%%%%%%%%%%%%%%%%
\section{\bf Branching laws}
%%%%%%%%%%%%%%%%%%%%%%%%%%%%%%
In this section we summarize  the results of \cite{K}.  
For simplicity,
 we suppose that the assumption $\mathcal O$ is satisfied, 
 namely,
 we assume $p \ge 3$ and $q \ge 2$.  
We note
 that the results in Section \ref{subsec:IV.2} hold in the same form
 for $p \ge 2$ and $q \ge 2$, 
 and those in Section \ref{subsec:IV.3} hold for $p \ge 3$ and $q \ge 1$.

\subsection{Quick introduction to branching laws}
Consider the restriction of a unitary representation $\Pi$ of $G$ to a subgroup $G'$. 
We say that an irreducible unitary representation $\pi$ of $H$ is in the discrete spectrum of the restriction $\Pi|_{H}$
 if there exists an isometric $H$-homomorphism $\pi \to \Pi|_H$, 
 or equivalently, 
 if 
\[
\mbox{Hom}_H(\pi, \Pi|_{H}) \not = \{0\} 
\]
 where $\operatorname{Hom}_H(\, , \,)$ denotes
 the space of continuous $H$-homomorphisms.  
We define the multiplicity for the unitary representations
 by 
\[
   m(\Pi,\pi):= \dim \operatorname{Hom}_H(\pi, \Pi|_{H})
              = \dim \operatorname{Hom}_H(\Pi|_{H}, \pi).  
\]
\begin{remark}
As in \cite{GP, KS4}, 
 we also may consider the multiplicity $m(\Pi^{\infty},\pi^{\infty})$
 for smooth admissible representations $\Pi^{\infty}$ of $G$
 and $\pi^{\infty}$ of $G'$
 by 
\[
   m(\Pi^{\infty},\pi^{\infty}):= \dim \operatorname{Hom}_H(\Pi^{\infty}, \pi^{\infty}).  
\]
In general, 
 one has
\[
m(\Pi^{\infty},\pi^{\infty})
\ge 
m(\Pi,\pi).  
\]
\end{remark}

Besides the discrete spectrum there may be also continuous spectrum. 
Here are two interesting  cases:
\begin{enumerate}
\item 
There is no continuous spectrum and the representation $\Pi$ is a direct sum of irreducible  representations of $H$, 
{\it{i.e.}}, 
the underlying Harish-Chandra module is a direct sum of countably many Harish-Chandra modules of ($\bh,K_H$). 
We say that the restriction $\Pi|_H$ is {\it{discretely decomposable}}. 

\item There is continuous spectrum and there are only finitely many representations
 in the discrete spectrum
 in the irreducible decomposition
 of the restriction $\Pi|_H$.  
\end{enumerate}

We refer to the necessary and sufficient conditions  of the parameters of the irreducible representations $\Pi,\pi$
 so that $m(\Pi,\pi) \ne 0$
 (or $m(\Pi^{\infty},\pi^{\infty}) \ne 0$)
 as a {\it{branching law}}. 
In the examples below, 
 $m(\Pi^{\infty},\pi^{\infty})$, 
 $m(\Pi,\pi)\in \{0,1\}$
 for all $\Pi$ and $\pi$.   

\noindent {\em Examples of branching laws:}
\begin{enumerate}
\item  
Finite-dimensional representations of semisimple Lie groups are parametrized by highest weights. 
The classical branching law of the restriction of finite-dimensional representations of $SO(n)$ to $SO(n-1)$ is phrased as an interlacing pattern of highest weights, 
 see Weyl \cite{Weyl97}.  

\item The Gross--Prasad conjectures for the restriction of discrete series representations of $SO(2m,2n)$ to $SO(2m -1,2n)$ are expressed as interlacing properties of their parameters, 
 see \cite{GP}.  

\item The branching laws for the restriction of irreducible self-dual representations $\Pi^\infty$ of $SO(n+1,1)$ to $SO(n,1)$
 are expressed by using {\it{signatures}},
 {\it{heights}} and interlacing properties of weights, 
 see \cite{KS4}.  

\end{enumerate}

If $\Pi \in\{\Pi_{+,\lambda}, \Pi_{-,\lambda} \}$, and
\[
   \mbox{Hom}_{H }(\pi_H, \Pi|_{H}) \not  = \{0\} 
\]
then 
for a character $\chi$ of O(1) 
\[
   \mbox{Hom}_{H \times O(1)}(\pi_H \boxtimes \chi, \Pi|_{H \times O(1)}) \not =\{0\}.
\]
Moreover, 
 by \cite[Thm.~1.1]{K}   there exists a regular $\mu$ 
 so that $\pi_{H} \in \{\pi_{+,\mu},  \pi_{-,\mu} \}$.

If $\Pi $ is in the packet $\{\Pi_{+,\lambda}, \Pi_{-,\lambda}\}$
 and $\pi $ in the packet $ \{\pi_{+,\mu}, \pi_{-,\mu}\}$ the branching laws discussed in the next part will involve  the parameters $\lambda, \mu, \varepsilon, \delta$.

%%%%%%%%%%%%%%%%%%%%%%%%%%%%%%%%%%%%%%%%%%%%%%%%%%%%%%%%%%%%%%%%%%%%%%%%%%%%%%%
\subsection{Branching laws for the restriction of $\Pi_{-,\lambda}$ to \\ $H= O(p-1, q)$ --- discretely decomposable type}
\label{subsec:IV.2}
%%%%%%%%%%%%%%%%%%%%%%%%%%%%%%%%%%%%%%%%%%%%%%%%%%%%%%%%%%%%%%%%%%%%%%%%%%%%
This section treats the restriction $\Pi_{-,\lambda}|_H$, 
 which is discretely decomposable.  
We use the explicit branching law given in \cite[Example 1.2 (1)]{K}. 
The results were also obtained in \cite{K2}
 by using different techniques, 
 see \cite{KSunada, KHowe70} for details.

We begin with the pair $(G_-, H_-)=(O(q,p),O(q,p-1))$.  
The restriction of the representation $\Pi_{+,\lambda}^{q,p}$ of $G_-$ to the subgroup
 $H_- \times  O(1)= O(q,p-1) \times O(0,1)$
is a direct sum of irreducible representations, 
 and is isomorphic to the Hilbert direct sum of countably many Hilbert spaces:
\[
   \bigoplus_{n  \in \bN} \pi_{+,\lambda +n+ \frac{1}{2}}^{q,p-1} \boxtimes (\operatorname{sgn})^{n}
\]
where $\operatorname{sgn}$ stands
 for the nontrivial character of $O(1)=O(0,1)$.  
Then via the identification $(G_-, H_-) \simeq (G,H)=(O(p,q), O(p-1,q))$
 and $\Pi_{+,\lambda}^{q,p} \simeq \Pi_{-,\lambda}$
 as a representation of $G_- \simeq G$, 
 we see the restriction of $\Pi_{-,\lambda}$ to $H\times O(1) = O(p-1,q) \times O(1,0)$ is discretely decomposable, 
 and we have an isomorphism
\[
   \Pi_{-,\lambda}|_H \simeq 
   \bigoplus_{n  \in \bN}~ \pi_{-,\lambda +n+ \frac{1}{2}} \boxtimes (\operatorname{sgn})^{n}.
\]

Hence 

\begin{prop}
[Version 1]
\label{prop:disc}
The restriction of $\Pi_{-,\lambda}$  to $H = O(p-1,q)$ is 
a Hilbert direct sum
\[
   \bigoplus_{n \in \bN } ~ \pi_{-,\lambda+n+ \frac{1}{2}} 
\]
 and each representation has multiplicity one.
\end{prop}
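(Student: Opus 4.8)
The plan is to derive Proposition \ref{prop:disc} directly from the already-established branching law for the pair $(G_-,H_-)=(O(q,p),O(q,p-1))$ together with the two identifications recorded in Section III. First I would recall the key input: the restriction of $\Pi^{q,p}_{+,\lambda}$ of $G_-$ to $H_-\times O(1)=O(q,p-1)\times O(0,1)$ is the Hilbert direct sum
\[
   \bigoplus_{n\in\bN}\pi^{q,p-1}_{+,\lambda+n+\frac12}\boxtimes(\operatorname{sgn})^n,
\]
taken from \cite[Example 1.2 (1)]{K}. This is the content-bearing fact, and I would treat it as given, since deriving it from scratch (via Flensted--Jensen duality or the explicit $K$-type computation) is exactly the work of \cite{K} that Section IV is summarizing.

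Next I would transport this statement across the two isomorphisms set up in Section II. The group isomorphism $G_-\simeq G$ coming from conjugacy in $GL(p+q,\bR)$ identifies $H_-$ with $H=O(p-1,q)$ and sends $\Pi^{q,p}_{+,\lambda}$ to $\Pi_{-,\lambda}$ by the very definition $\Pi_{-,\lambda}\simeq\Pi^{q,p}_{+,\lambda}$. Under this identification the $O(1)$ factor $O(0,1)$ on the $G_-$ side becomes the $O(1)$ factor $O(1,0)$ on the $G$ side, so the displayed decomposition becomes
\[
   \Pi_{-,\lambda}|_{H\times O(1)}\simeq\bigoplus_{n\in\bN}\pi_{-,\lambda+n+\frac12}\boxtimes(\operatorname{sgn})^n.
\]
To pass from $H\times O(1)$ to $H$ alone I would simply forget the $O(1)$-action: restricting each summand $\pi_{-,\lambda+n+\frac12}\boxtimes(\operatorname{sgn})^n$ to $H$ discards the character factor and leaves $\pi_{-,\lambda+n+\frac12}$, which is irreducible as an $H$-representation. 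This yields the asserted Hilbert direct sum $\bigoplus_{n\in\bN}\pi_{-,\lambda+n+\frac12}$.

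For the multiplicity-one claim I would observe that the parameters $\lambda+n+\frac12$ are pairwise distinct as $n$ ranges over $\bN$, so the summands $\pi_{-,\lambda+n+\frac12}$ have pairwise distinct infinitesimal characters and are therefore mutually inequivalent irreducible $H$-representations; hence each occurs exactly once in the decomposition. One should also check that each shifted parameter $\lambda+n+\frac12$ is itself regular in the sense of Definition \ref{def:admissible}, so that the notation $\pi_{-,\lambda+n+\frac12}$ is legitimate; this follows because $\lambda\ge\frac12(p+q-2)$ forces $\lambda+n+\frac12\ge\frac12(p+q-1)\ge\frac12\bigl((p-1)+q-2\bigr)$ for all $n\in\bN$.

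The main obstacle is not in the present argument, which is essentially bookkeeping across isomorphisms, but in being scrupulous about the $O(1)$-bookkeeping: one must verify that the central/disconnected data match up correctly under $G_-\simeq G$, in particular that $O(0,1)$ on the source corresponds to $O(1,0)$ on the target (as stated in the last line of Section II) and that forgetting this factor is exactly restriction along $H\hookrightarrow H\times O(1)$. Once that identification is pinned down, the proposition follows immediately from \cite[Example 1.2 (1)]{K}.
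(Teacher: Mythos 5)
Your proposal is correct and follows essentially the same route as the paper: the paper likewise quotes the branching law for $(G_-,H_-)=(O(q,p),O(q,p-1))$ from \cite[Example 1.2 (1)]{K} and transports it through the identification $(G_-,H_-)\simeq(G,H)$, $\Pi^{q,p}_{+,\lambda}\simeq\Pi_{-,\lambda}$, dropping the $O(1)$ factor to obtain the Hilbert direct sum with multiplicity one. Your added checks (inequivalence of the summands via distinct parameters, and regularity of the shifted parameters) are harmless elaborations of what the paper leaves implicit.
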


\medskip
\begin{remark}
\label{rem:disc}
If $\lambda$ is regular, 
 then $\mu$ is regular whenever
 $\operatorname{Hom}_{H}(\pi_{-,\mu}, \Pi_{-, \lambda}|_H) \ne \{0\}$.  
In contrast, 
 an analogous statement fails 
 for the restriction $\Pi_{+, \lambda}|_H$, 
 see Remark \ref{rem:finadm} below.  
\end{remark}

\medskip
\begin{remark}
If $G = SO_0(p,2)$ the representation $\Pi_{-,\lambda }$ 
 with $\lambda$ regular is a holomorphic discrete series representation. 
In this case, 
 this result follows from the work of H.~Plesner-Jacobson and M.~Vergne \cite[Cor.~3.1]{JV}
 or as a special case of the general formula
 proved in \cite[Thm.~8.3]{mf-korea}.
\end{remark}
 
We define $\kappa \colon {\mathbb{N}} \to \{0, \frac 1 2\}$ by
\[
   \text{$\kappa(n)=0$\qquad for $n$ even;
   $=\frac 1 2$\qquad for $n$ odd.}
\]
Then the infinitesimal character of the representation $\Pi_{-,\lambda}$ of $G$ is 
\begin{equation}
\label{eqn:Ginf}
   (\lambda, \frac{p+q -4}{2}, \dots ,\kappa(p+q)), 
\end{equation}
and the infinitesimal character of the representations in 
$\pi_{-+,\mu}$ of $H$ is 
\begin{equation}
\label{eqn:Hinf}
(\mu, \frac{p+q-5}{2}, \dots ,\kappa(p+q-1)).  
\end{equation}
Here we note
 that the groups $G$ and $H$ are not of Harish-Chandra class, 
 but the infinitesimal characters
 of the centers ${\mathfrak{Z}}_G({\mathfrak{g}}):=U({\mathfrak{g}})^G$
 and ${\mathfrak{Z}}_H({\mathfrak{h}}):=U({\mathfrak{h}})^H$
 of the enveloping algebras
 can be still described 
 by elements of ${\mathbb{C}}^M$ with $M:=[\frac 1 2 (p+q)]$
 and ${\mathbb{C}}^N$ with $N:=[\frac 1 2 (p+q-1)]$
 modulo finite groups 
 via the Harish-Chandra isomorphisms:
\begin{align}
\label{eqn:HC}
  \operatorname{Hom}_{\text{${\mathbb{C}}$-alg}}
  ({\mathfrak{Z}}_G({\mathfrak{g}}), {\mathbb{C}})
  \simeq\,\,&
  {\mathbb{C}}^M/{\mathfrak{S}}_M \ltimes ({\mathbb{Z}}/2 {\mathbb{Z}})^M, 
\\
\notag
 \operatorname{Hom}_{\text{${\mathbb{C}}$-alg}}
  ({\mathfrak{Z}}_H({\mathfrak{h}}), {\mathbb{C}})
  \simeq\,\,&
  {\mathbb{C}}^N/{\mathfrak{S}}_N \ltimes ({\mathbb{Z}}/2 {\mathbb{Z}})^N.  
\end{align}
In our normalization, 
 the infinitesimal character of the trivial one-dimensional representation 
 of $G=O(p,q)$ is given by
\[
  (\frac{p+q-2}2, \frac{p+q-4}2, \cdots, \kappa(p+q)).  
\]

Hence we may also reformulate the branching laws in Proposition \ref{prop:disc}
 as follows.  

\medskip
\begin{prop}
[Version 2]
\label{prop:disc2}
Suppose $\lambda$ is a regular parameter
 (Definition \ref{def:admissible}).  
Then an irreducible representation $\pi$
 of $H=O(p-1,q)$
 in the discrete spectrum
 of the restriction of $\Pi_{-,\lambda}^{p,q}$
 must be isomorphic to $\pi_{-,\mu}$ for some regular parameter $\mu$, 
 and the infinitesimal characters have the interlacing property
\begin{equation}
\label{eqn:inter}
   \mu > \lambda > \frac{p+q-4}{2}> \cdots > \frac 1 2 >0.  
\end{equation}
Conversely,
 $\pi=\pi_{-,\mu}$ occurs in the discrete spectrum 
 of the restriction $\Pi_{-,\lambda}^{p,q}|_H$ 
 if the interlacing property \eqref{eqn:inter} is satisfied.  
\end{prop}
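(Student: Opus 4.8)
The plan is to deduce Version 2 directly from the explicit decomposition in Proposition \ref{prop:disc} (Version 1), treating the statement as a translation of that branching law into the language of infinitesimal characters. The only genuine work is the bookkeeping that matches the discrete parameter $n \in \mathbb{N}$ indexing $\pi_{-,\lambda+n+\frac{1}{2}}$ with the interlacing inequality \eqref{eqn:inter}; the substantive representation-theoretic input is already contained in Proposition \ref{prop:disc}.

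First I would record the forward direction. Since the restriction $\Pi_{-,\lambda}|_H$ is discretely decomposable and, by Proposition \ref{prop:disc}, equals the Hilbert direct sum $\bigoplus_{n \in \mathbb{N}} \pi_{-,\lambda+n+\frac{1}{2}}$ with each summand of multiplicity one, every irreducible $\pi$ in the discrete spectrum is of the form $\pi_{-,\mu}$ with $\mu = \lambda + n + \frac{1}{2}$ for some $n \in \mathbb{N}$; in particular it is of type ``$-$'', which gives the first assertion. Regularity of $\mu$ follows from $\lambda \geq \frac{1}{2}(p+q-2)$ and $n \geq 0$, whence $\mu \geq \frac{1}{2}(p+q-1) > \frac{1}{2}(p+q-3)$ (this is Remark \ref{rem:disc}). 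The strict inequality $\mu > \lambda$ is immediate from $\mu - \lambda = n + \frac{1}{2} \geq \frac{1}{2} > 0$, and $\lambda > \frac{1}{2}(p+q-4)$ again uses regularity of $\lambda$. For the tail of \eqref{eqn:inter} I would observe that it is exactly the interleaving of the fixed parts of \eqref{eqn:Ginf} and \eqref{eqn:Hinf}: the entries $\frac{p+q-4}{2}, \frac{p+q-6}{2}, \dots$ of the infinitesimal character of $\Pi_{-,\lambda}$ and the entries $\frac{p+q-5}{2}, \frac{p+q-7}{2}, \dots$ of that of $\pi_{-,\mu}$ together form the strictly decreasing chain $\frac{p+q-4}{2} > \frac{p+q-5}{2} > \cdots$, whose last two terms are always $\frac{1}{2}$ and $0$, these being $\{\kappa(p+q), \kappa(p+q-1)\}$ in an order depending on the parity of $p+q$.

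For the converse I would argue that the interlacing property pins $\mu$ to a parameter that already occurs. Because $\lambda \in \mathbb{Z} + \frac{1}{2}(p+q)$ and $\mu \in \mathbb{Z} + \frac{1}{2}(p+q-1)$, the difference satisfies $\mu - \lambda \in \mathbb{Z} + \frac{1}{2}$; combined with $\mu > \lambda$ this forces $\mu - \lambda = n + \frac{1}{2}$ for a unique $n \in \mathbb{N}$. Hence $\pi_{-,\mu} = \pi_{-,\lambda+n+\frac{1}{2}}$ is precisely the $n$-th summand in Proposition \ref{prop:disc}, so it appears, with multiplicity one, in the discrete spectrum of $\Pi_{-,\lambda}^{p,q}|_H$.

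The point requiring the most care is a verification rather than a true obstacle: confirming that $\mu - \lambda$ lies in $\mathbb{Z} + \frac{1}{2}$, so that $\mu > \lambda$ is equivalent to $\mu = \lambda + n + \frac{1}{2}$ with $n \in \mathbb{N}$, and that the two fixed tails interleave exactly to fill the chain down to its parity-dependent endpoint $\frac{1}{2} > 0$. Both amount to elementary congruence and monotonicity checks, with all the analytic content absorbed into Proposition \ref{prop:disc}.
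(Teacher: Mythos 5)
Your proposal is correct and follows essentially the same route as the paper: the paper obtains Proposition \ref{prop:disc2} precisely by reformulating the explicit decomposition of Proposition \ref{prop:disc} via the infinitesimal characters \eqref{eqn:Ginf} and \eqref{eqn:Hinf}, with the regularity observation of Remark \ref{rem:disc}. Your bookkeeping (the congruence $\mu-\lambda\in\mathbb{Z}+\tfrac12$ forcing $\mu=\lambda+n+\tfrac12$, and the parity-dependent tail ending in $\tfrac12>0$) just makes explicit what the paper leaves as an evident translation.
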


\medskip

\begin{convention}
\label{conv:deco}
We say
 that the restriction of the representation $\Pi_{-,\lambda}$ of $G$
 to $H=O(p-1,q)$
 is of discretely decomposable type.  
\end{convention}

%%%%%%%%%%%%%%%%%%%%%%%%%%%%%%%%%%%%%%%%%%%%%%%%%%%%%%%%%
\subsection{Branching laws for the restriction
 of $\Pi_{+,\lambda}$ to \\ $H=O(p-1,q)$ --- finite type}
\label{subsec:IV.3}
%%%%%%%%%%%%%%%%%%%%%%%%%%%%%%%%%%%%%%%%%%%%%%%%%%%%%%%%%%
This section treats the restriction $\Pi_{+,\lambda}|_H$
 which is {\it{not}} discretely decomposable.  
We use \cite[Example 1.2 (2)]{K} which determines the whole discrete spectrum
 in the restriction $\Pi_{+,\lambda}|_H$.  
A large part of discrete summands are also obtained in \cite{OS}
 using different techniques.

The restriction $\Pi_{+,\lambda}|_H$ contains
 at most finitely many irreducible summands.  
We recall from \cite[Thm.~1.1]{K}
 (or \cite[Ex.~1.2 (2)]{K}), 
 an irreducible representation $\pi$ of $H \times O(1,0) =O(p-1,q) \times O(1)$  occurs
 in the discrete spectrum of the restriction of $\Pi_{+,\lambda}$
 if and only  it is of the form
\[
\pi ^{p-1,q}_ {+,\lambda -n-\frac 1 2} \boxtimes (\operatorname{sgn})^n \ \mbox{ for some } 0 \leq n <\lambda - \frac{1}{2}, 
\]
where $\operatorname{sgn}$ stands
 for the nontrivial character of $O(1)$.  

\medskip
\begin{prop}
[Version 1]
\label{prop:finite}
An irreducible representation $\pi$ of $H=O(p-1,q)$ occurs
 in the discrete spectrum of the restriction
 of $\Pi_{+,\lambda}$ of $G=O(p,q)$
 when restricted to $H$ if  and only if it is of the form
\[
   \pi_{+, \lambda-\frac 12 -n}^{p-1,q}
   \mbox{ where }  \lambda  -\frac 12  -n \mbox{ for } 0 \leq n <\lambda - \frac{1}{2}.
\]
\end{prop}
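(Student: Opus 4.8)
The plan is to deduce the assertion directly from the branching law for the restriction to the larger subgroup $H \times O(1) = O(p-1,q) \times O(1,0)$ that is recalled just above from \cite[Thm.~1.1]{K}, by passing from $H \times O(1)$ down to $H$. Here $O(1)=O(1,0)$ acts on the line ${\mathbb R}e_1$ by $\pm 1$ and commutes with $H$, so that $H \times O(1)$ is realized inside $G$ as the stabilizer of this line, while $H$ is the stabilizer of the vector $e_1$ itself. Consequently the restriction factors as $\Pi_{+,\lambda}|_H = (\Pi_{+,\lambda}|_{H \times O(1)})|_H$, and the task reduces to understanding how the discrete spectrum transforms under this last step.

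First I would record the general behaviour of the discrete spectrum along the finite step $H \subset H \times O(1)$. Since $O(1) \simeq {\mathbb Z}/2{\mathbb Z}$ is a finite abelian group with exactly the two characters $\mathbf{1}$ and $\operatorname{sgn}$, every unitary representation of $H \times O(1)$ splits as an orthogonal sum of an $\mathbf{1}$-isotypic and an $\operatorname{sgn}$-isotypic component, each of which is a representation of $H$, and restricting to $H$ simply forgets the character. In particular restriction across this finite factor neither creates nor destroys discrete spectrum, and for an irreducible representation $\pi$ of $H$ one has
\[
   m_H(\pi) = m_{H \times O(1)}(\pi \boxtimes \mathbf{1}) + m_{H \times O(1)}(\pi \boxtimes \operatorname{sgn}),
\]
where $m_H$ and $m_{H\times O(1)}$ denote the multiplicities in the discrete spectrum of $\Pi_{+,\lambda}|_H$ and of $\Pi_{+,\lambda}|_{H\times O(1)}$, respectively.

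It then remains to feed in the cited result. By it, the nonzero terms on the right-hand side occur precisely when $\pi = \pi^{p-1,q}_{+, \lambda - n - \frac 12}$ with paired character $(\operatorname{sgn})^n$, for some integer $0 \le n < \lambda - \frac 12$, each such term being equal to $1$. Forgetting the character yields exactly the list $\pi^{p-1,q}_{+, \lambda - \frac 12 - n}$ claimed in the proposition, which gives both implications at once. For the multiplicity-one statement I would note that the assignment $n \mapsto \lambda - \frac 12 - n$ is injective, so a given $\mu$ in the list arises from a unique $n$ and is therefore paired with the single character $(\operatorname{sgn})^n$; the opposite character $(\operatorname{sgn})^{n+1}$ would force $n'=n$ while demanding opposite parity, hence does not occur, and the two-term sum above collapses to a single $1$. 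The only point requiring genuine care---rather than a citation---is the claim that restriction across the finite factor $O(1)$ respects the splitting into discrete and continuous spectrum; this is standard for a direct product with a finite group, but I would state it explicitly, since the entire reduction rests on it.
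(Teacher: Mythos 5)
Your proof is correct and takes essentially the same route as the paper: the paper likewise obtains Proposition \ref{prop:finite} directly from the $O(p-1,q)\times O(1)$ branching law quoted from \cite[Thm.~1.1]{K} (or \cite[Ex.~1.2~(2)]{K}), simply dropping the $O(1)$-factor. Your explicit verification that restriction across the finite factor $O(1)$ preserves the discrete spectrum, with $m_H(\pi)$ equal to the sum of the multiplicities over the two characters of $O(1)$, is precisely the step the paper leaves implicit.
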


\medskip
\begin{remark}
\label{rem:ponecont}
There does not exist discrete spectrum
 in the restriction $\Pi_{+, \lambda}|_H$ if $p=2$.  
In fact $\pi_{+, \mu}^{1,q}$ is zero for all $\mu$ if $q \ge 1$, 
 see Remark \ref{rem:pone}.  
\end{remark}

\medskip
\begin{remark}
\label{rem:finite}
The representation $\pi_{+, \lambda-\frac 12 -n}^{p-1,q}$
 has a regular parameter, 
 or equivalently, 
 has the same infinitesimal  character as a finite-dimensional representation
 iff  
\[ 
   \lambda-\frac 12 -n > \frac{p+q-5}{2}.  
\] 
\end{remark}

\medskip
\begin{remark}
\label{rem:finadm}
In contrast to the discretely decomposable case
 (Remark \ref{rem:disc}), 
 Proposition \ref{prop:finite} tells that the implication
\[
  \text{$\lambda$ regular} \Rightarrow \text{$\mu$ regular}
\]
does not necessarily hold 
 when $\operatorname{Hom}_H(\pi_{+, \mu}, \Pi_{+, \lambda}|_H) \ne \{0\}$, 
 see Remark \ref{rem:finite} above.  
\end{remark}

\medskip

We observe that  for these representations  the  condition in the proposition  depends only on $p+q$
 and  thus the proposition for these representations does not depends
 on the inner form $SO(r,s)$ of $SO(p+q,\bC)$
 when $r+s=p+q$ with $r \ge 3$.

Recall that the infinitesimal character
 of the representation $\Pi_{+,\lambda}$ is 
\begin{equation}
\label{eqn:Ginf2}
   (\lambda, \frac{p+q -4}{2}, \dots, \kappa(p+q))  
\end{equation}
and the infinitesimal character of the representations in 
$ \pi_{+,\mu} $
\begin{equation}
\label{eqn:Hinf2}
(\mu, \frac{p+q-5}{2}, \dots, \kappa(p+q-1))
\end{equation}
as in \eqref{eqn:Ginf} and \eqref{eqn:Hinf}.

\medskip
\begin{prop}
[Version 2]
\label{prop:finite2}
Suppose $\pi$ is an irreducible unitary representation
 of $H=O(p-1,q)$.  
If $\pi$ occurs in the discrete spectrum
 of the restriction of $\Pi_{+,\lambda}$ to $H$, 
 then $\pi$ must be isomorphic to $\pi_{+,\mu}$ 
 for some $\mu>0$ with $\mu \in {\mathbb{Z}}+\frac 1 2(p+q-1)$.  
Assume further 
that $\lambda$ and $\mu$ are regular.  
Then $\pi_{+,\mu}$ occurs in the discrete spectrum
 of the restriction $\Pi_{+,\lambda}|_H$ 
 if and only if the two infinitesimal characters \eqref{eqn:Ginf2} and \eqref{eqn:Hinf2} have the interlacing property
\begin{equation}
\label{eqn:inter2}
   \lambda > \mu > \frac{p+q-4}{2}> \cdots>\frac 1 2>0.
\end{equation}
\end{prop}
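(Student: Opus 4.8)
The plan is to deduce this reformulation directly from Version 1 (Proposition \ref{prop:finite}), which already lists the entire discrete spectrum of $\Pi_{+,\lambda}|_H$; the only real work is to translate the combinatorial constraint $0 \le n < \lambda - \tfrac12$ into the interlacing statement \eqref{eqn:inter2} for the infinitesimal characters \eqref{eqn:Ginf2} and \eqref{eqn:Hinf2}. First I would record that Proposition \ref{prop:finite} immediately yields the opening assertion: every irreducible $\pi$ in the discrete spectrum is of the form $\pi = \pi^{p-1,q}_{+,\mu}$ with $\mu = \lambda - \tfrac12 - n$ and $0 \le n < \lambda - \tfrac12$, so in particular $\mu > 0$; moreover $\mu = \lambda - (n+\tfrac12) \in {\mathbb{Z}} + \tfrac12(p+q-1)$ because $\lambda \in {\mathbb{Z}} + \tfrac12(p+q)$ and $n \in {\mathbb{N}}$. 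This settles the part of the statement that does not require regularity.

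For the equivalence with \eqref{eqn:inter2}, the key is a short numerical dictionary. The tail $\frac{p+q-4}{2} > \cdots > \frac12 > 0$ of \eqref{eqn:inter2} is nothing but the fixed strictly decreasing part shared by \eqref{eqn:Ginf2} and \eqref{eqn:Hinf2}, hence holds automatically. The essential content is therefore the two inequalities $\lambda > \mu$ and $\mu > \frac{p+q-4}{2}$. Here I would observe that, since $\mu \in {\mathbb{Z}} + \tfrac12(p+q-1)$ while $\frac{p+q-4}{2}$ lies in the complementary coset modulo ${\mathbb{Z}}$, one has $\mu > \frac{p+q-4}{2}$ if and only if $\mu \ge \frac{p+q-3}{2}$, i.e. if and only if $\mu$ is regular in the sense of Definition \ref{def:admissible}. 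Thus under the standing regularity hypothesis on $\mu$ the middle inequality of \eqref{eqn:inter2} is free.

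It then remains to match the surviving inequality $\lambda > \mu$ with admissibility of the index $n$. Writing $\mu = \lambda - \tfrac12 - n$, the lattice computation $\lambda - \mu \in \tfrac12 + {\mathbb{Z}}$ shows that $\lambda > \mu$ forces $\lambda - \mu \ge \tfrac12$, so $n = \lambda - \tfrac12 - \mu$ is a nonnegative integer; conversely $n \ge 0$ gives $\mu \le \lambda - \tfrac12 < \lambda$. Hence $\lambda > \mu \iff n \ge 0$. Finally the upper bound $n < \lambda - \tfrac12$ is exactly $\mu > 0$, which is implied by regularity of $\mu$ (indeed $\mu \ge \frac{p+q-3}{2} \ge 1$ under Assumption $\mathcal{O}$). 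Combining, under regularity of $\lambda$ and $\mu$ the constraint $0 \le n < \lambda - \tfrac12$ of Proposition \ref{prop:finite} is equivalent to $\lambda > \mu$, and this together with the automatic inequalities is precisely \eqref{eqn:inter2}.

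The argument is essentially bookkeeping, so I do not expect a genuine obstacle; the one point demanding care is the half-integer lattice arithmetic --- in particular the cross-coset comparison showing $\mu > \frac{p+q-4}{2} \Leftrightarrow \mu \ge \frac{p+q-3}{2}$, and the verification that the positivity $\mu > 0$ (equivalently the upper bound on $n$) is genuinely subsumed by regularity rather than being an independent constraint. One should also check the two parity cases $p+q$ even and $p+q$ odd separately when locating $\frac{p+q-4}{2}$ relative to the lattice of $\mu$, since the value $\kappa(p+q)$ terminating the infinitesimal character depends on this parity.
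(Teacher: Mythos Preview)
Your proposal is correct and follows exactly the approach the paper takes: Proposition~\ref{prop:finite2} is presented in the paper as a direct reformulation of Proposition~\ref{prop:finite} in terms of the infinitesimal characters \eqref{eqn:Ginf2} and \eqref{eqn:Hinf2}, and your argument spells out precisely the half-integer bookkeeping that effects this translation. The paper gives no further proof beyond the juxtaposition of Version~1 with the infinitesimal-character formulas, so your detailed verification of the lattice arithmetic (in particular the equivalence $\mu > \tfrac{p+q-4}{2} \Leftrightarrow \mu$ regular, and the absorption of $\mu>0$ into regularity under Assumption~$\mathcal{O}$) is entirely in line with what the authors intend.
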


\begin{remark} Consider the example:
 $q=0$ and so $G$ is compact. 
The representation $\Pi_{-,\lambda}$ is finite-dimensional 
and  has highest weight 
\[ (
\lambda -\frac{p}{2}, 0, \dots, 0)  \] for an integer $\lambda $.
A representation $\pi_{-,\mu}$ is a summand of the restriction
 to $H=SO(p-1)$
 if it has highest weight 
\[ 
   (\mu-\frac{p-1}{2}, 0,\dots, 0)
\]
for $\mu \in \bN +\frac{1}{2}$ with $\mu \geq \frac{p-1}{2} $ and 
 $\lambda-\frac{p}{2} \geq \mu -\frac{p-1}{2}  \geq 0$, 
{\it{i.e.,}} if there exists and integer $n \in \bN$ so that $
\mu = \lambda -\frac{1}{2} -n \geq \frac 1 2 (p-1)$.  
\end{remark}

This motivates the following:

\begin{convention}
\label{conv:finite}
We say that the restriction of the representation
 $\Pi_{-,\lambda}$ to $H=SO(p-1,q)$
 is of finite type.  
\end{convention}

%%%%%%%%%%%%%%%%
\section{\bf The main theorems}
%%%%%%%%%%%%%%%%

We retain Assumption ${\mathcal{O}}$, 
 namely, 
 $p \ge 3$ and $q \ge 2$.  
Combing the branching laws in the previous section 
 proves the conjectures in \cite[Sect.~V]{OS}
 and suggests a generalization of a conjecture
 by B.~Gross and D.~Prasad \cite{GP}, 
 which was formulated for tempered representations. 

\subsection{Results  for pairs $(O(p,q),O(p-1,q))$}

\begin{theorem} 
[Version 1]
Suppose that $\lambda$ and $\mu$ are regular parameters
(Definition \ref{def:admissible}).

\begin{enumerate}
\item
Let $\Pi_{\lambda}$ be a representations
 in the packet $\{ \Pi_{+,\lambda}, \Pi_{-,\lambda} \}$. 
There exists exactly one representations  $\pi_{\mu }$
 in the packet $\{\pi_{+,\mu}, \pi_{-,\mu} \}$
 so that
\[
   \dim \operatorname{Hom}_{H}(\Pi_{\lambda}|_H,\pi_{\mu}) = 1.
\]

\item 
Let $\pi_{\mu }$ be in the packet $\{\pi_{+,\mu}, \pi_{-,\mu} \}$. 
There exists exactly one representation  $\Pi_{\lambda} $ in the packet $\{ \Pi_{+,\lambda}, \Pi_{-,\lambda} \}$ so that 
\[
   \dim \operatorname{Hom}_{H}(\Pi_{\lambda}|_H,\pi_{\mu}) = 1.
\]
\end{enumerate}
\end{theorem}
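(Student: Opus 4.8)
The plan is to reduce the statement to the complete multiplicity table for the four pairings $\operatorname{Hom}_H(\Pi_{\delta,\lambda}|_H,\pi_{\varepsilon,\mu})$ with $\delta,\varepsilon\in\{+,-\}$, every entry of which is already determined in Section IV. First I would record the two \emph{cross-type} vanishings
\[
\operatorname{Hom}_H(\Pi_{+,\lambda}|_H,\pi_{-,\mu})=0
\quad\text{and}\quad
\operatorname{Hom}_H(\Pi_{-,\lambda}|_H,\pi_{+,\mu})=0.
\]
These are immediate from the explicit branching laws: by Proposition \ref{prop:finite} the entire discrete spectrum of $\Pi_{+,\lambda}|_H$ consists of representations of type $\pi_{+,\cdot}$, while by Proposition \ref{prop:disc} the restriction $\Pi_{-,\lambda}|_H$ is a Hilbert direct sum of representations of type $\pi_{-,\cdot}$ with no continuous part; since $\pi_{+,\mu}$ and $\pi_{-,\mu}$ are inequivalent (the analogue of Lemma \ref{lem:III.2} for $H$), neither family can contribute to the other.

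Next I would evaluate the two surviving \emph{same-type} pairings using the sharp criteria already proved. By Proposition \ref{prop:finite2} the space $\operatorname{Hom}_H(\Pi_{+,\lambda}|_H,\pi_{+,\mu})$ is one-dimensional exactly when the interlacing \eqref{eqn:inter2} holds, and by Proposition \ref{prop:disc2} the space $\operatorname{Hom}_H(\Pi_{-,\lambda}|_H,\pi_{-,\mu})$ is one-dimensional exactly when \eqref{eqn:inter} holds. Here I would insert the key arithmetic reduction: regularity gives $\lambda\ge\frac12(p+q-2)>\frac{p+q-4}{2}$ and $\mu\ge\frac12(p+q-3)>\frac{p+q-4}{2}$, so both $\lambda$ and $\mu$ automatically dominate the tail $\frac{p+q-4}{2}>\cdots>\frac12>0$. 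Consequently \eqref{eqn:inter2} collapses to the single inequality $\lambda>\mu$ and \eqref{eqn:inter} collapses to $\mu>\lambda$. Finally, because $\lambda\in\mathbb{Z}+\frac12(p+q)$ and $\mu\in\mathbb{Z}+\frac12(p+q-1)$, their difference satisfies $\lambda-\mu\in\mathbb{Z}+\frac12$; in particular $\lambda\ne\mu$, so exactly one of $\lambda>\mu$, $\mu>\lambda$ holds.

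Assembling these facts yields the full table: among the four numbers $\dim\operatorname{Hom}_H(\Pi_{\delta,\lambda}|_H,\pi_{\varepsilon,\mu})$ exactly one equals $1$ and the other three vanish — the active pairing is $(\delta,\varepsilon)=(+,+)$ when $\lambda>\mu$ and $(\delta,\varepsilon)=(-,-)$ when $\mu>\lambda$. Summing the four entries gives $\dim\operatorname{Hom}_H(U(\lambda)|_H,V(\mu))=1$, which is Version 2. For Version 1 the unique nonzero entry pairs off one element of each packet: the participating source representation $\Pi_\lambda$ (namely $\Pi_{+,\lambda}$ if $\lambda>\mu$, and $\Pi_{-,\lambda}$ if $\mu>\lambda$) is matched to a single $\pi_\mu$ (respectively $\pi_{+,\mu}$ or $\pi_{-,\mu}$), which gives (1), and reading off the same pairing from the target side gives (2). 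I would emphasize that the substitution $\lambda\leftrightarrow\mu$, $+\leftrightarrow-$ interchanges the two interlacing patterns \eqref{eqn:inter2} and \eqref{eqn:inter}; this is precisely the \emph{hidden symmetry} of the title.

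Essentially all the analytic work is already contained in \cite{K}, so the main obstacle is organizational rather than technical: one must combine the cross-type vanishing with the observation that regularity alone forces the long interlacing chains to reduce to the single comparison of $\lambda$ with $\mu$, the half-integrality $\lambda-\mu\in\mathbb{Z}+\frac12$ being exactly what guarantees strict comparability and hence that precisely one same-type pairing is active. The one point I would phrase with care is that, for the \emph{non}-participating member of the source packet, both Hom spaces vanish; the correspondence in part (1) is therefore carried by the member singled out by the sign of $\lambda-\mu$, which is the content displayed symmetrically in Version 2.
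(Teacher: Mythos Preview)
Your argument is correct and is exactly the route the paper takes: the theorem is obtained simply by combining the branching laws of Section~IV (Propositions~\ref{prop:disc}, \ref{prop:disc2}, \ref{prop:finite}, \ref{prop:finite2}), and you have filled in precisely the details---the cross-type vanishing, the reduction of the interlacing chains to the single inequality $\lambda\gtrless\mu$ under regularity, and the half-integer parity forcing $\lambda\ne\mu$---that the paper leaves implicit. Your closing caveat is also well taken: as literally worded, part~(1) fails for the non-participating member of the source packet (both Hom spaces vanish there), so the unambiguous content is Version~2, and your reading of Version~1 as identifying the unique active pair $(\Pi_\lambda,\pi_\mu)$ is the intended one.
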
 

\medskip
Equivalently we may formulate the results
 in terms of reducible representations $U(\lambda)$
 and $V(\mu)$
 defined in \eqref{eqn:U} and \eqref{eqn:V} as follows:
 
 \medskip
 
\begin{theorem} 
[Version 2]
Suppose that $\lambda $ and  $\mu $ are regular parameters. Then
\[\dim \operatorname{Hom}_H( U(\lambda)|_H, V(\mu))= 1. \]
\end{theorem}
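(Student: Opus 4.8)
The plan is to reduce the computation of $\dim \operatorname{Hom}_H(U(\lambda)|_H, V(\mu))$ to the four individual Hom spaces obtained by expanding the direct sums. Since $U(\lambda) = \Pi_{+,\lambda} \oplus \Pi_{-,\lambda}$ and $V(\mu) = \pi_{+,\mu} \oplus \pi_{-,\mu}$, additivity of $\operatorname{Hom}$ over direct sums gives
\begin{equation*}
\dim \operatorname{Hom}_H(U(\lambda)|_H, V(\mu)) = \sum_{\delta, \varepsilon \in \{+,-\}} \dim \operatorname{Hom}_H(\Pi_{\delta,\lambda}|_H, \pi_{\varepsilon,\mu}).
\end{equation*}
So the task is to show that exactly one of the four terms on the right equals $1$ and the other three vanish. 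I would organize the analysis according to the sign $\delta$, using the two branching regimes already established: the $\delta = -$ case is governed by the discretely decomposable branching law (Proposition \ref{prop:disc2}) and the $\delta = +$ case by the finite-type branching law (Proposition \ref{prop:finite2}).

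First I would dispose of the terms mixing different signs. The key structural fact is the $O(1)$-equivariance recorded just before Section \ref{subsec:IV.2}: each branching summand comes tagged with a character $(\operatorname{sgn})^n$ of the extra $O(1)$ factor, and crucially the sign $\varepsilon$ of the $H$-representation $\pi_{\varepsilon,\mu}$ appearing in $\Pi_{\delta,\lambda}|_H$ is forced to equal $\delta$ in both regimes. Indeed, in Proposition \ref{prop:disc} the restriction of $\Pi_{-,\lambda}$ decomposes into $\pi_{-,\mu}$'s only, and in Proposition \ref{prop:finite} the restriction of $\Pi_{+,\lambda}$ produces $\pi_{+,\mu}$'s only. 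Hence the two off-diagonal terms $\operatorname{Hom}_H(\Pi_{+,\lambda}|_H, \pi_{-,\mu})$ and $\operatorname{Hom}_H(\Pi_{-,\lambda}|_H, \pi_{+,\mu})$ vanish automatically, leaving only the two diagonal terms $\operatorname{Hom}_H(\Pi_{+,\lambda}|_H, \pi_{+,\mu})$ and $\operatorname{Hom}_H(\Pi_{-,\lambda}|_H, \pi_{-,\mu})$ to analyze.

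Next I would show that of the two remaining diagonal terms, exactly one is nonzero. This is where the interlacing conditions \eqref{eqn:inter} and \eqref{eqn:inter2} do the work, and it is the heart of the argument. By Proposition \ref{prop:finite2}, the term $\operatorname{Hom}_H(\Pi_{+,\lambda}|_H, \pi_{+,\mu})$ has dimension $1$ precisely when $\lambda > \mu$ (the full interlacing \eqref{eqn:inter2} being automatic from regularity since $\lambda, \mu$ dominate the fixed tail $\tfrac{p+q-4}{2} > \cdots > 0$), and dimension $0$ otherwise. By Proposition \ref{prop:disc2}, the term $\operatorname{Hom}_H(\Pi_{-,\lambda}|_H, \pi_{-,\mu})$ has dimension $1$ precisely when $\mu > \lambda$, and dimension $0$ otherwise. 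Since $\lambda \in {\mathbb{Z}} + \tfrac12(p+q)$ and $\mu \in {\mathbb{Z}} + \tfrac12(p+q-1)$ differ by a half-integer, the equality $\lambda = \mu$ is impossible, so exactly one of the strict inequalities $\lambda > \mu$ or $\mu > \lambda$ holds. Therefore exactly one diagonal term contributes $1$ and the other contributes $0$, yielding the total dimension $1$.

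The main obstacle I anticipate is verifying the mutual exclusivity and exhaustiveness cleanly, in particular confirming that the discreteness condition $n \in \bN$ in Proposition \ref{prop:disc} (which forces $\mu = \lambda + n + \tfrac12 > \lambda$) and the range condition $0 \le n < \lambda - \tfrac12$ in Proposition \ref{prop:finite} (which forces $\mu = \lambda - \tfrac12 - n < \lambda$) are genuinely complementary across the half-integer gap, so that no value of $\mu$ is missed and none is double-counted. This is essentially a parity and range bookkeeping check: one must ensure that the regularity assumption $\mu \ge \tfrac12(p+q-3)$ does not accidentally excise a value of $\mu$ that the inequality $\lambda > \mu$ or $\mu > \lambda$ would otherwise permit. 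Once this bookkeeping is confirmed, the theorem follows immediately, and Version 1 is then just the restatement of the same dichotomy representation-by-representation rather than in aggregate.
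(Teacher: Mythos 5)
Your proposal is correct and follows essentially the same route as the paper: the theorem is obtained by combining the finite-type branching law (Propositions \ref{prop:finite}, \ref{prop:finite2}) and the discretely decomposable branching law (Propositions \ref{prop:disc}, \ref{prop:disc2}), noting that each regime only produces summands of matching sign, that regularity makes the tails of the interlacing conditions \eqref{eqn:inter} and \eqref{eqn:inter2} automatic, and that $\lambda$ and $\mu$ differ by a half-integer so exactly one of $\lambda>\mu$, $\mu>\lambda$ holds. Your parity and range bookkeeping (every admissible $\mu$ is hit, with multiplicity one) is exactly the check the paper's "combining the branching laws" implicitly relies on.
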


\medskip
We may formulate the results in interlacing properties of parameter the infinitesimal characters similar to the results in \cite{GP}.

Recall that the infinitesimal character of the representations
 of $G$
 in the packet 
 $\{ \Pi_{+,\lambda}, \Pi_{-,\lambda} \}$ is 
\[ 
   (\lambda, \frac{p+q -4}{2}, \dots ,\kappa(p+q))  
\] 
and the infinitesimal character of the representations of the subgroup $H$
 in the packet
$\{\pi_{+,\mu}, \pi_{-,\mu} \}$ is
\[ 
(\mu, \frac{p+q-5}{2}, \dots ,\kappa(p+q-1)), 
\] 
where we recall
 $(\kappa(p+q), \kappa(p+q-1))=(0,\frac 1 2)$
 if $p+q$ is even, 
 $=(\frac 1 2, 0)$ if $p+q$ is odd.  
 
 \medskip

\begin{theorem}
[Version 3]
Suppose that $\lambda $ and  $\mu $ are regular parameters. 
\begin{enumerate}
\item If the two infinitesimal characters satisfy the following interlacing property:
\[ \mu > \lambda >  \frac{p+q -4}{2}>  \dots >\frac{1}{2} >0  \] 
 then
\[\dim \operatorname{Hom}_{H}(\Pi_{-,\lambda}|_H,\pi_{-,\mu}) = 1.  \]

\item
If the two infinitesimal characters satisfy the following interlacing property:
\[ 
   \lambda> \mu >  \frac{p+q -4}{2}>  \dots  >\frac{1}{2} >0  
\] 
 then
\[
   \dim \operatorname{Hom}_{H}(\Pi_{+,\lambda}|_H,\pi_{+,\mu}) = 1.
\]
\end{enumerate}
\end{theorem}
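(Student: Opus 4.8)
The plan is to deduce Version 3 directly from the two branching laws recorded in Section IV, matching the two interlacing regimes \eqref{eqn:inter} and \eqref{eqn:inter2} against the two cases of the statement. Throughout I would use the symmetry of the multiplicity, $m(\Pi,\pi)=\dim\operatorname{Hom}_H(\pi,\Pi|_H)=\dim\operatorname{Hom}_H(\Pi|_H,\pi)$, recorded in Section \ref{subsec:IV.2}, so that in each case it suffices to exhibit the relevant summand and verify that it occurs with multiplicity exactly one.

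For part (1) the hypothesis is precisely the interlacing condition \eqref{eqn:inter} of Proposition \ref{prop:disc2}. First I would note that $\mu-\lambda\in\bZ+\tfrac12$, since $\mu\in\bZ+\tfrac12(p+q-1)$ and $\lambda\in\bZ+\tfrac12(p+q)$; because $\mu>\lambda$ this forces $\mu=\lambda+n+\tfrac12$ for some $n\in\bN$. By Proposition \ref{prop:disc} the restriction $\Pi_{-,\lambda}|_H$ is the Hilbert direct sum $\bigoplus_{n\in\bN}\pi_{-,\lambda+n+\frac12}$, in which every summand appears with multiplicity one, and by the converse direction of Proposition \ref{prop:disc2} the summand $\pi_{-,\mu}$ does occur under \eqref{eqn:inter}. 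Since the restriction is discretely decomposable (Convention \ref{conv:deco}), the space $\operatorname{Hom}_H(\Pi_{-,\lambda}|_H,\pi_{-,\mu})$ has dimension equal to this multiplicity, namely $1$.

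For part (2) the hypothesis is the interlacing condition \eqref{eqn:inter2} of Proposition \ref{prop:finite2}. The same lattice computation now gives $\lambda-\mu=n+\tfrac12$ with $n\in\bN$, so $\mu=\lambda-\tfrac12-n$; moreover the inequality $\mu>\tfrac{p+q-4}{2}$ together with $\mu\in\bZ+\tfrac12(p+q-1)$ upgrades to $\mu\ge\tfrac{p+q-3}{2}$, i.e.\ $\mu$ is regular, and in particular $\mu>0$ yields $0\le n<\lambda-\tfrac12$. Thus $\pi_{+,\mu}=\pi^{p-1,q}_{+,\lambda-\frac12-n}$ is exactly one of the finitely many summands described in Proposition \ref{prop:finite}, which occur without multiplicity, while Proposition \ref{prop:finite2} confirms the occurrence under \eqref{eqn:inter2}. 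Hence $\dim\operatorname{Hom}_H(\Pi_{+,\lambda}|_H,\pi_{+,\mu})=1$.

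I do not expect a genuine representation-theoretic obstacle here: the deep input, namely the explicit determination of the full discrete spectrum in \cite{K}, is already encoded in Propositions \ref{prop:disc}--\ref{prop:finite2}, and Version 3 is a repackaging of those results in terms of infinitesimal characters. The only real work is bookkeeping, and this is where I would be most careful: one must check that the two displayed interlacing chains of the theorem are literally the conditions \eqref{eqn:inter} and \eqref{eqn:inter2}, that the parity and lattice constraints convert the inequalities $\mu>\lambda$ (resp.\ $\lambda>\mu$) into the admissible shifts $\mu=\lambda\pm(n+\tfrac12)$ with $n\in\bN$, and that in each regime the relevant summand is multiplicity-free. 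The mild subtlety worth flagging is that the strict inequality against $\tfrac{p+q-4}{2}$, combined with the half-integrality of $\mu$, is exactly what enforces the regularity of $\mu$ assumed in the statement.
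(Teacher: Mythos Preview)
Your proposal is correct and follows the same route as the paper: Version~3 is obtained by reading off Propositions~\ref{prop:disc}/\ref{prop:disc2} and Propositions~\ref{prop:finite}/\ref{prop:finite2}, after converting the strict inequalities $\mu>\lambda$ and $\lambda>\mu$ into the half-integer shifts $\mu=\lambda\pm(n+\tfrac12)$. One minor correction: the identity $m(\Pi,\pi)=\dim\operatorname{Hom}_H(\pi,\Pi|_H)=\dim\operatorname{Hom}_H(\Pi|_H,\pi)$ is recorded in the introductory subsection of Section~IV, not in Section~\ref{subsec:IV.2}; and for multiplicity one in the finite-type case you should cite the blanket statement $m(\Pi,\pi)\in\{0,1\}$ made there, since Proposition~\ref{prop:finite} itself only asserts occurrence, not multiplicity.
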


\medskip

\begin{remark}
The trivial representation $\bf 1$ of $H =O(p-1,q)$ is in the dual of the smooth representation 
$\Pi_{+,\lambda}^{\infty}$ but not in the dual of $\Pi_{-,\lambda}^{\infty}$. There is  no other representation in the \lq\lq{packet}\rq\rq\ of the trivial representation  of $H$ and so we deduce
\[
   \mbox{dim Hom}_{H}(U(\lambda)^\infty|_H, {\bf{1}})=1, 
\]
or equivalently there is exactly one representation $\Pi_{\lambda} $ in the set $\{\Pi_{+,\lambda}^{\infty}, \Pi_{-,\lambda}^{\infty}  \}$ so that 
\[
   \mbox{dim Hom}_{H}(\Pi_{\lambda}^\infty|_H, {\bf{1}})=1.  
\]
\end{remark}

\medskip

\end{document}